\numberwithin{figure}{section}
\newcommand\norm[1]{\left\lVert#1\right\rVert}
\theoremstyle{plain}
\newtheorem{theorem}{Theorem}
\newtheorem{proposition}[theorem]{Proposition}
\newtheorem{corollary}{Corollary}[theorem]
\theoremstyle{definition}
\newtheorem{definition}{Definition}[section]
\newtheorem{conjecture}{Conjecture}[section]
\theoremstyle{remark}
\begin{document}
\title[polynomial growth of subharmonic functions]{polynomial growth of subharmonic functions in a strongly symmetric Riemannian manifold}
\author[A. A. Shaikh and C. K. Mondal]{Absos Ali Shaikh$^{1*}$ and Chandan Kumar Mondal$^2$}
\maketitle

\begin{abstract}
In this article we have studied some properties of subharmonic functions in a strongly symmetric Riemannian manifold with a pole. As a generalization of polynomial growth of a function we have introduced the notion of polynomial growth of some degree of a function with respect to a real function and proved that any non-negative twice differentiable subharmonic functions in an $n$-dimensional manifold always admit polynomial growth of degree $1$ with respect to a non-negative real valued subharmonic function on real line. We have also given a lower bound of the integration of a convex function in a geodesic ball. 
\end{abstract}
\noindent\footnotetext{ $^*$ Corresponding author.\\
$\mathbf{2010}$\hspace{5pt}Mathematics\; Subject\; Classification: 53C21, 53C43, 58E20,  58J05.\\ 
{Key words and phrases: Subharmonic function, polynomial growth, polynomial growth with respect to a function, convex function, manifold with a pole, strongly symmetric manifold} }

\section{introduction}
A real valued function $f$ is said to have polynomial growth of order $q\in \mathbb{R}$ if there is a constant $C>0$ such that $|f|(x)\leq Cr^q(x)$ for all $x\in M$, where $r(x)$ is the distance of $x\in M$ from a fixed point $x_0\in M$ and is denoted by $|f(x)|=O(r^q)$. The growth of harmonic and subharmonic functions have been studied by many authors \cite{LT89}, \cite{YAU75}. The growth of a function depends on the geometrical structure of the manifold. The first significant work about the harmonic function has been done by Yau \cite{YAU75} in 1975. A function $f$ is said to have sublinear growth if $|f(x)|=o(r)$. Cheng \cite{CH80} proved that a complete Riemannian manifold with non-negative Ricci curvature does not admit any non-constant harmonic function with sublinear growth. This led Yau to formulate the following conjecture. 
\begin{conjecture}
For each integer $q$, the space of harmonic functions on a manifold $M$ with non-negative Ricci curvature satisfying
$|f(x)|=O(r^q(x)),$ is finite dimensional.
\end{conjecture}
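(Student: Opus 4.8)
The displayed statement is Yau's conjecture, ultimately settled by Colding and Minicozzi; the plan below follows their circle of ideas. Write $\mathcal H_q(M)$ for the space of harmonic $f$ on the (complete) manifold $M$ with $|f(x)|=O(r^q(x))$. The plan is not merely to show $\dim\mathcal H_q(M)<\infty$ but to produce an explicit bound, the sharp target being $\dim\mathcal H_q(M)\le C(n)\,q^{n-1}$ (sharp, since the harmonic polynomials of degree $\le q$ on $\mathbb R^n$ already have dimension $\sim q^{n-1}$). Only three analytic facts enter, all consequences of $\mathrm{Ric}\ge 0$: (i) Bishop--Gromov volume comparison, which gives uniform volume doubling $\mathrm{Vol}\,B_{2\rho}(x)\le 2^{n}\,\mathrm{Vol}\,B_{\rho}(x)$; (ii) the $L^2$ Neumann--Poincar\'e inequality on balls with a purely dimensional constant (Buser's inequality); and (iii), obtained from (i)--(ii) by Moser iteration applied to the subharmonic function $u^2$, the mean value inequality $\sup_{B_\rho(x)}u^2\le \frac{C(n)}{\mathrm{Vol}\,B_{2\rho}(x)}\int_{B_{2\rho}(x)}u^2$. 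Everything past these three inputs is linear algebra together with a volume computation.

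\textbf{Step 1: a slow-doubling subspace, via the determinant trick.} Fix $q$ and a subspace $K\subseteq\mathcal H_q(M)$ with $\dim K=k$; the goal is to bound $k$. Fix a base point $x_0$ and put on $K$ the scale of inner products $A_\rho(u,v)=\int_{B_\rho(x_0)}uv$. Since each $u\in K$ obeys $|u|\le C_u\,r^q$, the Gram determinant (relative to a fixed reference basis) satisfies $\det A_\rho\le C(K)\,\rho^{(2q+n)k}$ for $\rho\ge 1$, while $\rho\mapsto\log\det A_\rho$ is nondecreasing (the forms $A_\rho$ increase with $\rho$). Telescoping the nonnegative increments $\log\det A_{\beta^{j+1}}-\log\det A_{\beta^{j}}$ over a long run of geometric scales $\rho=\beta^{j}$, and comparing the sum with the above linear-in-$\log\rho$ bound, one finds a scale $\rho$ and a basis $u_1,\dots,u_k$ that is $A_\rho$-orthonormal and simultaneously diagonalizes $A_{\beta\rho}$, with $\prod_i A_{\beta\rho}(u_i,u_i)\le \beta^{(2q+n)k}e$. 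As every factor is $\ge 1$, at least $k/2$ of them satisfy $A_{\beta\rho}(u_i,u_i)\le\beta^{2(2q+n)}$; let $W$ be their span. Because the $u_i$ also diagonalize $A_{\beta\rho}$, every $u\in W$ inherits the uniform $L^2$-doubling $\int_{B_{\beta\rho}(x_0)}u^2\le\Lambda\int_{B_\rho(x_0)}u^2$ with $\Lambda=\beta^{2(2q+n)}$, and $\dim W\ge k/2$.

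\textbf{Step 2: slow doubling bounds the dimension.} Suppose $W\subseteq\mathcal H_q(M)$ carries the uniform bound $\int_{B_{\beta\rho}(x_0)}u^2\le\Lambda\int_{B_\rho(x_0)}u^2$ for all $u\in W$, with $\beta\ge 4$ fixed. Take an $A_\rho$-orthonormal basis $u_1,\dots,u_w$ of $W$. For $x\in B_\rho(x_0)$ and $u\in W$, apply the mean value inequality (iii) on $B_{2\rho}(x)$, noting $B_{2\rho}(x)\subseteq B_{\beta\rho}(x_0)$ and $B_\rho(x_0)\subseteq B_{2\rho}(x)$; this gives $u(x)^2\le\frac{C(n)}{\mathrm{Vol}\,B_\rho(x_0)}\int_{B_{\beta\rho}(x_0)}u^2\le\frac{C(n)\Lambda}{\mathrm{Vol}\,B_\rho(x_0)}\,A_\rho(u,u)$. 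Hence the squared $A_\rho$-norm of the evaluation functional at $x$ is $\sum_i u_i(x)^2\le C(n)\Lambda/\mathrm{Vol}\,B_\rho(x_0)$, and integrating in $x$ over $B_\rho(x_0)$,
\[
w=\sum_i A_\rho(u_i,u_i)=\int_{B_\rho(x_0)}\Big(\sum_i u_i(x)^2\Big)\,dx\le C(n)\,\Lambda .
\]
Feeding in Step 1, $k/2\le\dim W\le C(n)\Lambda=C(n)\beta^{2(2q+n)}$; since $K$ was an arbitrary finite-dimensional subspace and $\beta$ is a fixed constant, $\dim\mathcal H_q(M)\le C(n)\beta^{2(2q+n)}<\infty$, which proves the conjecture.

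\textbf{The main obstacle.} Each of Steps 1 and 2 is individually soft, and as written they already prove finiteness, but only with a bound exponential in $q$. The real work --- the technical heart of Colding--Minicozzi --- is to improve this to the sharp polynomial bound $C(n)q^{n-1}$. For that one cannot keep $\beta$ fixed: one must let $\beta$, and the number of geometric scales used in the telescoping of Step 1, both grow with $q$, run the ``big-determinant-increment versus mean-value-inequality'' dichotomy over an entire sequence of good scales rather than a single one, and make certain every constant in Step 2 remains independent of $q$ --- which is precisely why the \emph{dimension-free} Buser inequality, not a crude Poincar\'e inequality, is needed. Arranging the exponents so they collapse from $\beta^{(2q+n)k}$ to the correct power of $q$ without picking up a loss that grows with $\dim K$ is where essentially all the difficulty lies.
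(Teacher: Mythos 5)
The statement you were asked about is not proved in the paper at all: it is stated as Conjecture 1.1 (Yau's conjecture), quoted as motivation, and the paper only records the partial result of Li--Tam and then develops a different, generalized notion of growth. So there is no proof in the paper to compare yours against; what you have written is an outline of the known resolution of the conjecture due to Colding--Minicozzi (in the streamlined form given by Li), and as such it is essentially sound. Your two steps are the standard ones: the monotonicity of $\rho\mapsto\det A_\rho$ together with the growth bound $\det A_\rho\le C(K)\rho^{(2q+n)k}$ (Hadamard's inequality for the Gram matrix plus Bishop--Gromov) and a pigeonhole over geometric scales produce a subspace $W$ with $\dim W\gtrsim k/2$ on which the two forms $A_\rho$, $A_{\beta\rho}$ are simultaneously diagonal and uniformly comparable; then the Li--Schoen mean value inequality for the subharmonic function $u^2$ (which is exactly inequality (\ref{eq4}) that the paper itself uses) bounds the trace $\sum_i u_i(x)^2$ of the evaluation functional and hence $\dim W\le C(n)\Lambda$. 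Minor points you should tidy up but which do not affect the argument: interpret $|f|=O(r^q)$ as $|f|\le C(1+r)^q$ so the $L^2$ bounds hold on small balls; $A_\rho$ is positive definite by unique continuation of harmonic functions, which is what licenses the simultaneous diagonalization; the counting ``at least $k/2$ factors are $\le\beta^{2(2q+n)}$'' really gives $k/2-O(1)$ indices, which is all you need; and the containments in Step 2 require $\beta\ge 3$, consistent with your choice $\beta\ge4$. You are also right, and honest, that this fixed-$\beta$ argument only yields a bound exponential in $q$, and that the sharp $C(n)q^{n-1}$ bound requires letting $\beta$ and the number of scales grow with $q$; but finiteness, which is all the conjecture asks, already follows from what you wrote.
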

Li and Tam \cite{LT89} proved that if the volume function of geodesic balls has polynomial growth of order $q>0$, then the space of harmonic functions with polynomial growth of degree $1$ is finite dimensional.
In this article we have generalized the notion of polynomial growth of a real valued function in a Riemannian manifold. We have defined polynomial growth of a function with respect to a real function ( If a function is from $\mathbb{R}$ to $\mathbb{R}$, then we call it a real function) and showed that under suitable conditions all non-negative twice differentiable subharmonic functions have polynomial growth of degree $1$ with respect to a real subharmonic function. \\
\indent We organize this paper as follows: Section 2 deals with some preliminaries of Riemannian manifold. In this section we have introduced the notion of polynomial growth with respect to a function, which is the natural generalization of polynomial growth of a function in a Riemannian manifold. In section 3 we have proved that every non-negative $C^2$ subharmonic function in a strongly symmetric Riemannian manifold with a pole possesses polynomial growth of degree $1$ with respect to a non-negative real subharmonic function. In this section we have also given a lower bound for the integration of a convex function in terms of volume, distance function and a real subharmonic function.
\section{Preliminaries} 
In this section we have discussed some basic facts of a Riemannian manifold $(M,g)$, which will be used throughout this paper (for reference see \cite{PE06}). Throughout this paper by $M$ we mean a complete Riemannian manifold of dimension $n$ endowed with some positive definite metric $g$ unless otherwise stated. The tangent space at the point $p\in M$ is denoted by $T_pM$ and the tangent bundle is defined by $TM=\cup_{p\in M}T_pM$. The length $l(\gamma)$ of the curve $\gamma:[a,b]\rightarrow M$ is given by
\begin{eqnarray*}
l(\gamma)&=&\int_{a}^{b}\sqrt{g_{\gamma(t)}(\dot{\gamma}(t),\dot{\gamma}(t))}\ dt\\
&=&\int_{a}^{b}\norm {\dot{\gamma}(t)}dt.
\end{eqnarray*}
The curve $\gamma$ is said to be a geodesic if $\nabla_{\dot{\gamma}(t)}\dot{\gamma}(t)=0\ \forall t\in [a,b]$, where $\nabla$ is the Riemannian connection of $g$.
For any point $p\in M$, the exponential map $exp_p:V_p\rightarrow M$ is defined by
$$exp_p(u)=\sigma_u(1),$$
where $\sigma_u$ is a geodesic with $\sigma(0)=p$ and $\dot{\sigma}_u(0)=u$ and $V_p$ is a collection of vectors of $T_pM$ such that for each element $u\in V_p$, the geodesic with initial tangent vector $u$ is defined on $[0,1]$. It can be easily seen that for a geodesic $\sigma$, the norm of a tangent vector is constant, i.e., $\norm {\dot{\gamma}(t)}$ is constant. If the tangent vector of a geodesic is of unit norm, then the geodesic is called normal. If the exponential map exp is defined at all points of $T_pM$ for each $p\in M$, then $M$ is called complete. Hopf-Rinow theorem provides some equivalent cases for the completeness of $M$. Let $x$, $y\in M$. The distance between $p$ and $q$ is defined by
$$d(x,y)=\inf\{l(\gamma):\gamma \text{ be a curve joining }x \text{ and }y\}.$$
A geodesic $\sigma$ joining $x$ and $y$ is called minimal if $l(\sigma)=d(x,y)$. Hopf-Rinow theorem guarantees the existence of minimal geodesic between two points of $M$. A smooth vector field is a smooth function $X:M\rightarrow TM$ such that $\pi\circ X=id_M$, where $\pi:TM\rightarrow M$ is the projection map.\\
\indent A pole in $M$ is such a point where the tangent space is diffeomorphic to the whole manifold. Gromoll and Meyer \cite{GM69} introduced the notion of pole in $M$. A point $o\in M$ is called a pole of $M$ if the exponential map at $o$ is a global diffeomorphism and a manifold $M$ with a pole $o$ is denoted by $(M,o)$. If a manifold possesses a pole then the manifold is complete. Simply connected complete Riemannian manifold with non-positive sectional curvature and a paraboloid of revolution are the examples of Riemannian manifolds with a pole. A manifold with a pole is diffeomorphic to the Euclidean space but the converse is not always true, see \cite{14}.\\
\indent  The gradient of a smooth function $f:M\rightarrow\mathbb{R}$ at the point $p\in M$ is defined by $\nabla f(p)=g^{ij}\frac{\partial f}{\partial x_{j}}\frac{\partial}{\partial x_i}\mid_p.$ It is the unique vector field such that $g( \nabla f,X)=X(f)$ for all smooth vector field $X$ in $M$. The Hessian $Hess(f)$ is the $(0,2)$-tensor field, defined by $Hess(f)(X,Y)=g(\nabla_X\nabla f,Y)$ for all smooth vector fields $X,Y$ of $M$. For a vector field $X$, the divergence of $X$ is defined by
$$div(X)=\frac{1}{\sqrt{g}}\frac{\partial }{\partial x^j}\sqrt{g}X^j,$$
where $g=det(g_{ij})$ and $X=X^j\frac{\partial}{\partial x^j}$. The Laplacian of $f$ is defined by $\Delta f=div(\nabla f)$. 
\begin{definition}\cite{YAU75}
A $C^2$-function $u:M\rightarrow\mathbb{R}$ is said to be harmonic if $\Delta u=0$. The function $u$ is called subharmonic (superharmonic) if $\Delta \geq (\leq )0$.
\end{definition}
\begin{definition}\cite{UDR94}\cite{RAP97}
 A real valued function $f$ on $M$ is called convex if for every geodesic $\gamma:[a,b]\rightarrow M$, the following inequality holds
\begin{equation*}
f\circ\gamma((1-t)a+tb)\leq (1-t)f\circ\gamma(a)+tf\circ\gamma(b)\quad \forall t\in [0,1],
\end{equation*}
or if $f$ is differentiable, then 
\begin{equation*}
g(\nabla f,X)_x\leq f(exp_x\nabla f)-f(x), \ \forall X\in T_xM.
\end{equation*}
\end{definition}
If the function $f$ is $C^2$, then the convexity of $f$ is equivalent to $Hess(f)\geq 0$. Zhang and Xu \cite{ZX04} introduced the notion of strongly symmetric manifold. And they studied some properties of subharmonic function in strongly symmetric manifold with a pole. Some discussion about strongly symmetric manifold can also be found in \cite{GW79}, where they use the term ``model'' instead of `` strongly symmetric manifold''. 
\begin{definition}\cite{ZX04}
A manifold $(M,o)$ with a pole $o$ is strongly symmetric around $o$ if and only if every linear isometry $\phi_o:T_oM\rightarrow T_oM$ can be realized as the differential of an isometry $\phi:M\rightarrow M$, i.e., $\phi(o)=o$ and $d\phi(o)=\phi_o$.
\end{definition}

\begin{definition}
A function $u:M\rightarrow\mathbb{R}$ is said to have \textit{polynomial growth of degree $q\in\mathbb{R}$ with respect to $v$}, where $v$ is a real function, if $|f(x)|=O(r^qv(r)),$ where $r(x)$ is the distance of $x$ from a fixed point in $M$.
\end{definition}
We say that a function has $(v,p)$-polynomial growth if it has polynomial growth of degree $p$ with respect to $v$. If $v\equiv 1$, then we get the definition of polynomial growth. Then using this generalized notion of polynomial growth, we can generalized the Conjecture 1.1 and ask the following:
\begin{conjecture}
 For a fixed integer $p$ and real function $v$, the space of harmonic functions with $(v,p)$-polynomial growth on a Riemannian manifold having non-negative Ricci curvature is finite dimensional?
\end{conjecture}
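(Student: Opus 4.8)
The plan is to model a proof on the Colding--Minicozzi--Li resolution of the classical Yau conjecture (the case $v\equiv 1$), tracking carefully where the weight $v$ enters. First I would extract from $\mathrm{Ric}\ge 0$ the three analytic inputs that the whole dimension-counting machinery rests on. Bishop--Gromov volume comparison gives volume doubling, $V(B_{2r})\le 2^{n}V(B_r)$ and more generally $V(B_{sr})\le C_D(s)V(B_r)$, together with the upper bound $V(B_r)\le C r^{n}$. Since $u$ harmonic forces $u^2$ to be subharmonic, the Li--Schoen mean value inequality yields
\[
\sup_{B_{r/2}(x)}u^{2}\le \frac{C_M}{V(B_r(x))}\int_{B_r(x)}u^{2}.
\]
Finally I would record the reverse Poincar\'e (Caccioppoli) inequality $\int_{B_r}\norm{\nabla u}^{2}\le C r^{-2}\int_{B_{2r}}u^{2}$. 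These are the only geometric facts needed.

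Next I would convert the pointwise $(v,p)$-growth hypothesis into a growth bound on the $L^2$-energy. If $u$ has $(v,p)$-polynomial growth then $|u(x)|\le A\,r^{p}v(r)$ on $B_r$, so with the volume bound $V(B_r)\le Cr^{n}$ one obtains, writing $E_u(r):=\int_{B_r}u^{2}$,
\[
E_u(r)\le A^{2}r^{2p}v(r)^{2}V(B_r)\le A'\,r^{\,n+2p}\,v(r)^{2}.
\]
This is the controlled-growth datum to be fed into the counting step.

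The dimension bound itself follows Li's trace argument. For a $k$-dimensional subspace $K$ of harmonic functions with $(v,p)$-growth, fix a scale $r$ and choose an $L^2(B_r)$-orthonormal basis $u_1,\dots,u_k$. The trace $\sum_i u_i(x)^2$ is independent of the basis and equals $\sup\{u(x)^2: u\in K,\ \norm{u}_{L^2(B_r)}\le 1\}$. Applying the mean value inequality and volume doubling pointwise gives $\sum_i u_i(x)^2\le C_M C_D\,V(B_r)^{-1}\sup_{u}\int_{B_{2r}}u^2$; and \emph{if} one has an $L^2$-doubling constant $\Lambda$ with $\int_{B_{2r}}u^{2}\le\Lambda\int_{B_r}u^{2}$ for every $u\in K$, then integrating over $B_r$ collapses the estimate to $k\le C_M C_D\Lambda$, a bound independent of $k$. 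This would give finite-dimensionality with an explicit bound.

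The hard part is producing the doubling constant $\Lambda$, which is exactly where the weight $v$ must be controlled. The counting lemma needs a scale $r$ (or a positive density of dyadic scales) at which $E_u(2r)\le\Lambda E_u(r)$ holds simultaneously for all $u\in K$. In the classical case $v\equiv 1$ such scales exist by a pigeonhole/determinant argument precisely because $E_u$ grows at most polynomially, forcing the multiplicative increments $E_u(2r)/E_u(r)$ to be mostly bounded (by $\Lambda=2^{\,n+2p+1}$). Reproducing this with the weight requires $r^{2p}v(r)^{2}$ to be doubling, i.e.\ $v(2r)\le C\,v(r)$, equivalently $v$ of at most polynomial type. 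I expect this to be the genuine obstacle, because the conjecture as stated omits any such restriction: if $v$ grows like $e^{r}$ then $E_u$ may grow exponentially, no good doubling scale need exist, and indeed exponential-growth harmonic functions already form an infinite-dimensional family on $\mathbb{R}^n$, so the statement is \emph{false} for unrestricted $v$. Thus the crux is to isolate the sharp tempered/doubling hypothesis on $v$ under which the pigeonhole step survives; granting that hypothesis, the Colding--Minicozzi--Li argument above goes through essentially verbatim.
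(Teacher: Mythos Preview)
The statement you have been asked to prove is not a theorem in the paper but a \emph{conjecture}: the authors pose it as an open question generalising Yau's conjecture, and no proof (or even proof sketch) is offered anywhere in the paper. There is therefore nothing to compare your proposal against.

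That said, your analysis is substantively correct and in fact sharper than the paper's formulation. You rightly observe that the conjecture as stated---with no hypothesis whatsoever on the real function $v$---is simply false: taking $v(r)=e^{r}$ on $M=\mathbb{R}^{n}$ already yields an infinite-dimensional space of harmonic functions with $(v,p)$-growth for any $p$, since the exponential harmonics $e^{\langle a,x\rangle}$ with $|a|=1$ all satisfy the bound. Your identification of the doubling condition $v(2r)\le C\,v(r)$ as the natural structural hypothesis under which the Colding--Minicozzi/Li trace-and-pigeonhole machinery goes through is exactly right; with that assumption the growth bound $E_u(r)\le A' r^{\,n+2p}v(r)^{2}$ is itself doubling, and the remainder of the argument is the standard one. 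So your proposal is not a proof of the conjecture as written (because none exists), but a correct diagnosis of the missing hypothesis together with a valid outline of the proof once that hypothesis is added.
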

\section{polynomial growth and subharmonic functions}
\begin{theorem}\cite{LS84}
Let $M$ be an $n$-dimensional complete manifold with non-negative Ricci curvature. Then any non-negative subharmonic function $u:M\rightarrow\mathbb{R}$ satisfies
\begin{equation}\label{eq4}
\sup_{B_{r/2}}u\leq  \frac{C}{Vol(B_r)}\int_{B_r}u,
\end{equation}
where the constant $C$ depends only on $n$.
\end{theorem}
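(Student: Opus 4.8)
The plan is to prove this by the classical Moser iteration, the curvature hypothesis entering only through two standard consequences of $\mathrm{Ric}\geq0$: the Bishop--Gromov comparison, which gives the volume doubling property $Vol(B_{2\rho})\leq2^{n}Vol(B_{\rho})$, and the Neumann--Poincar\'e inequality on geodesic balls (Buser's inequality), which together with doubling yields a scale-invariant local Sobolev inequality --- there exist $\nu=\nu(n)>1$ and $C_{0}=C_{0}(n)$ such that for every ball $B_{\rho}\subset M$ and every $\varphi\in C_{c}^{\infty}(B_{\rho})$,
\[
\Big(\frac{1}{Vol(B_{\rho})}\int_{B_{\rho}}|\varphi|^{2\nu}\Big)^{1/\nu}\leq\frac{C_{0}\rho^{2}}{Vol(B_{\rho})}\int_{B_{\rho}}\big(|\nabla\varphi|^{2}+\rho^{-2}\varphi^{2}\big)
\]
(one may take $\nu=n/(n-2)$ when $n\geq3$ and any fixed $\nu>1$ otherwise). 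These two facts I would invoke as black boxes.

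The first step is a Caccioppoli estimate for powers of $u$: for $\beta\geq1$ and a cutoff $\phi\in C_{c}^{\infty}(B_{\rho})$, testing $\Delta u\geq0$ against $\phi^{2}u^{2\beta-1}$ and integrating by parts gives $\int\phi^{2}|\nabla(u^{\beta})|^{2}\leq C\beta^{2}\int|\nabla\phi|^{2}u^{2\beta}$. Inserting $\varphi=\phi u^{\beta}$ into the Sobolev inequality above, with $\phi$ equal to $1$ on a concentric ball $B_{\rho'}$, $|\nabla\phi|\leq2/(\rho-\rho')$, and using doubling to compare $Vol(B_{\rho'})$ with $Vol(B_{\rho})$, I obtain the reverse-H\"older step
\[
\Big(\frac{1}{Vol(B_{\rho'})}\int_{B_{\rho'}}u^{2\beta\nu}\Big)^{1/\nu}\leq\frac{C_{1}(n)\,\beta^{2}}{(1-\rho'/\rho)^{2}}\cdot\frac{1}{Vol(B_{\rho})}\int_{B_{\rho}}u^{2\beta},\qquad\tfrac12\rho\leq\rho'<\rho.
\]

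Then I would iterate this with $\beta=\nu^{k}$, exponents $p_{k}=2\nu^{k}$, and radii $\rho_{k}=\tfrac{r}{2}(1+2^{-k})\downarrow r/2$: writing $\Phi(p,\rho)=\big(Vol(B_{\rho})^{-1}\int_{B_{\rho}}u^{p}\big)^{1/p}$, the step becomes $\Phi(p_{k+1},\rho_{k+1})\leq(C_{1}\nu^{2k}4^{k+2})^{1/p_{k}}\Phi(p_{k},\rho_{k})$, and since $\sum_{k}p_{k}^{-1}\log(C_{1}\nu^{2k}4^{k+2})<\infty$ the infinite product of the constants converges to some $C_{2}(n)$; letting $k\to\infty$ gives the $L^{2}$-mean-value bound $\sup_{B_{r/2}}u\leq C_{2}(n)\big(Vol(B_{r})^{-1}\int_{B_{r}}u^{2}\big)^{1/2}$. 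To replace the $L^{2}$ norm by the $L^{1}$ norm of \eqref{eq4}, I would run the same iteration between the concentric pair $B_{\theta r}\subset B_{\tau r}$ for $\tfrac12\leq\theta<\tau\leq1$, obtaining $\sup_{B_{\theta r}}u\leq C_{3}(n)(\tau-\theta)^{-\gamma}\big(Vol(B_{\tau r})^{-1}\int_{B_{\tau r}}u^{2}\big)^{1/2}$ with $\gamma=\gamma(n)$, then bound $\int_{B_{\tau r}}u^{2}\leq\big(\sup_{B_{\tau r}}u\big)\int_{B_{r}}u$ and $Vol(B_{\tau r})\geq c(n)Vol(B_{r})$ by doubling, apply Young's inequality to peel off $\tfrac12\sup_{B_{\tau r}}u$, and conclude with the standard absorption lemma applied to the bounded nondecreasing function $\theta\mapsto\sup_{B_{\theta r}}u$. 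The main obstacle is not any single inequality but the bookkeeping across infinitely many iteration steps --- keeping every intermediate estimate in the correct scale-invariant, doubling-robust form so that the compounded constant depends on $n$ alone, and arranging the $L^{2}\to L^{1}$ absorption so that the exponent $\gamma$ and the Young split are compatible with the hypotheses of the absorption lemma.
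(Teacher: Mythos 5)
Note first that the paper contains no proof of this statement at all: it is imported verbatim from Li--Schoen \cite{LS84} as a known black box, and everything the paper does with it starts from the inequality (\ref{eq4}) as given. So there is no internal argument to compare against; what can be judged is whether your outline is a viable proof of the cited result, and it is. The chain Caccioppoli estimate for $u^{\beta}$, scale-invariant local Sobolev inequality, Moser iteration with radii $\rho_k=\tfrac r2(1+2^{-k})$, and then the $L^{2}\to L^{1}$ reduction by writing $\int_{B_{\tau r}}u^{2}\le(\sup_{B_{\tau r}}u)\int_{B_{r}}u$, splitting by Young and absorbing via the standard iteration lemma, is exactly the architecture of the original Li--Schoen proof; in particular the absorption trick in your last step is literally the device used in \cite{LS84} to pass from $p=2$ to all $p>0$. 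The one genuine difference is the source of the Sobolev inequality: Li--Schoen obtain their local Sobolev/isoperimetric constant from Croke's inequality together with Bishop--Gromov volume comparison, whereas you invoke Buser's Neumann--Poincar\'e inequality plus volume doubling, which yields the scale-invariant Sobolev inequality only through the later Saloff-Coste--Grigor'yan theorem; this is a perfectly legitimate (and nowadays more standard) route, it just rests on heavier machinery than the paper's cited source used. Two small points to keep honest: the iteration needs $\sup_{\overline{B}_r}u<\infty$ before the absorption lemma applies, which follows from continuity of $u$ and compactness of closed balls in a complete manifold; and for $n=2$ one must indeed fix some $\nu>1$ as you say, since $n/(n-2)$ is unavailable. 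With those remarks, the proposal is a correct proof sketch of the quoted theorem, not of anything argued in this paper.
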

\begin{theorem}\label{th1}
Let $(M,o)$ be an $n$-dimensional manifold with a pole $o$ and $Ric_M\geq 0$. If $M$ is strongly symmetric around $o$, then for every non-negative subharmonic function $u\in C^2(M)$, there exists a non-negative subharmonic function $v$ on $\mathbb{R}$ such that.
\begin{equation}\label{eq5}
\sup_{B_{r/2}}u\leq rCv(r) \text{ for every }r>0,
\end{equation}
where $C$ is the constant, depends only on $n$. In particular, $u(x)=O(r\omega(r)),$ for some non-negative subharmonic function $\omega$ in $\mathbb{R}$.
\end{theorem}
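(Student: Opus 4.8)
The plan is to combine the Li--Schoen estimate \eqref{eq4} with the classical sub-mean-value analysis for subharmonic functions on a rotationally symmetric manifold. First I would record that a strongly symmetric manifold with a pole is a \emph{model}: since $\exp_o$ is a diffeomorphism and, by naturality of the exponential map under isometries, every linear isometry $\phi_o$ of $T_oM$ coincides with $\exp_o^{-1}\circ\phi\circ\exp_o$ for the isometry $\phi$ realizing it, the metric $\exp_o^{*}g$ on $T_oM\cong\mathbb R^n$ is $O(n)$-invariant, so in geodesic polar coordinates $g=dr^2+\phi(r)^2\,d\sigma^2$ with $d\sigma^2$ the round metric on $S^{n-1}$ and $\phi$ smooth, $\phi(0)=0$, $\phi'(0)=1$, $\phi>0$ on $(0,\infty)$ (see \cite{ZX04},\cite{GW79}). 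The radial Ricci curvature of such a metric equals $-(n-1)\phi''/\phi$, so $Ric_M\ge0$ forces $\phi$ to be concave, and with $\phi'(0)=1$ and $\phi>0$ this gives $0\le\phi'\le1$, hence $0<\phi(r)\le r$. Writing $\bar u(s)$ for the average of $u$ over the geodesic sphere $\partial B_s(o)$, the coarea formula then gives
\[
\frac{1}{Vol(B_r)}\int_{B_r}u=\frac{\int_0^r\phi(s)^{n-1}\,\bar u(s)\,ds}{\int_0^r\phi(s)^{n-1}\,ds},
\]
i.e.\ a $\phi^{\,n-1}$-weighted average of $\bar u$ over $[0,r]$.

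The next step is to show that $\bar u$ is non-decreasing. From $\Delta=\partial_r^2+(n-1)\tfrac{\phi'}{\phi}\partial_r+\phi^{-2}\Delta_{S^{n-1}}$ and $\int_{S^{n-1}}\Delta_{S^{n-1}}(\cdot)=0$ one obtains $\overline{\Delta u}=\bar u''+(n-1)\tfrac{\phi'}{\phi}\bar u'=\phi^{-(n-1)}\big(\phi^{\,n-1}\bar u'\big)'$; subharmonicity of $u$ makes $\phi^{\,n-1}\bar u'$ non-decreasing, and since $u\in C^2$ and $\phi(0)=0$ it vanishes at $s=0$, hence $\bar u'\ge0$ on $(0,\infty)$. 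Therefore the weighted average above is $\le\bar u(r)$, and \eqref{eq4} becomes $\sup_{B_{r/2}}u\le C\,\bar u(r)$ with $C=C(n)$. Finally, since $r\mapsto\bar u(r)$ is continuous, non-negative and non-decreasing, $r\mapsto\bar u(r)/r$ is $O(1/r)$ near $0$ and is dominated at infinity by a convex function, so it admits a non-negative convex --- hence subharmonic --- majorant $v$; then $\bar u(r)\le r\,v(r)$, which is \eqref{eq5}, and the ``in particular'' clause follows with $\omega=v$ because $u(x)\le\sup_{B_{r/2}}u$ whenever $d(x,o)\le r/2$.

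The step I expect to need the most care is the monotonicity of $\bar u$ --- in particular the degeneracy of the weight $\phi^{\,n-1}$ at the pole, where one has to check $\phi(s)^{n-1}\bar u'(s)\to0$ as $s\to0^+$; this is exactly where the hypothesis $u\in C^2$ enters. The one genuinely soft point is the choice of $v$ near $r=0$: if $u(o)>0$ then $\sup_{B_{r/2}}u\ge u(o)$ forces $v(r)\gtrsim1/r$ as $r\to0^+$, so no function that is finite (hence subharmonic) on all of $\mathbb R$ can satisfy \eqref{eq5} down to $r=0$; the clean reading is to require \eqref{eq5} only for $r$ bounded away from $0$, or to take $v$ subharmonic on $(0,\infty)$ --- either way the stated growth $u(x)=O(r\,\omega(r))$ is unaffected.
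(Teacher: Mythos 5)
Your argument is sound, and in the two places where it departs from the paper it actually tightens the paper's own reasoning. The core is the same: both proofs work with the spherical average of $u$ (the paper's $v$, your $\bar u$) and derive its monotonicity from $\Delta u\ge 0$ --- your identity $\overline{\Delta u}=h^{1-n}\big(h^{n-1}\bar u'\big)'$ is exactly the paper's divergence-theorem relation $v'(r)\,Vol(\partial B_r)=\int_{B_r}\Delta u\,dV$ written in the coordinates \eqref{eq12}, and your check that $h^{n-1}\bar u'\to 0$ at the pole is precisely where $u\in C^2$ enters. The differences are these. First, to pass from the Li--Schoen bound \eqref{eq4} to the spherical average, the paper invokes ``$Vol(\partial B_r)\le Vol(B_r)$'' and hence $\frac{1}{Vol(B_r)}\le\int_\epsilon^r\frac{dt}{Vol(\partial B_t)}$, a dimensionally inconsistent comparison that fails for small $r$ already in $\mathbb{R}^n$; your weighted-average inequality $\frac{1}{Vol(B_r)}\int_{B_r}u\le\bar u(r)$ replaces it rigorously and yields the cleaner estimate $\sup_{B_{r/2}}u\le C\,\bar u(r)$. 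Second, the paper asserts \eqref{eq5} for every $r>0$ with $v$ equal to the spherical average itself; as you observe, no such bound with $C=C(n)$ can hold near $r=0$ when $u(o)>0$ (test $u\equiv 1$ on $\mathbb{R}^n$: the claim reads $1\le Cr$), and the defect traces back exactly to the volume comparison just mentioned. Your remedy --- a non-negative convex majorant $v$ of $\bar u(r)/r$ valid for $r$ bounded away from $0$ --- proves the correct large-$r$ form of the inequality, which is all that the ``in particular'' conclusion $u(x)=O(r\,\omega(r))$ requires; it has the added merit that a convex $v$ is subharmonic on $\mathbb{R}$ in the literal sense, whereas the paper only verifies $v''+(\Delta r)v'\ge 0$, i.e.\ subharmonicity of the radial extension on $M$, not of $v$ on the line. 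Two details you should still write out: the construction of the convex majorant of a continuous non-decreasing function (e.g.\ piecewise linear with recursively chosen non-decreasing slopes, smoothed if one insists on $C^2$), and the fact that the bound $h(r)\le r$ you extract from $Ric_M\ge 0$ is never actually used in your argument.
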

\begin{proof}
Since $M$ is diffeomorphic to the euclidean space so by taking the polar coordinates of $\mathbb{R}^n$ as $(r,\theta^1,\cdots,\theta^{n-1})$, the metric of $M$ can be expressed in polar from, i.e.,
\begin{equation}\label{eq12}
ds^2=dr^2+\sum_{i,j}g_{ij}d\theta^id\theta^j=dr^2+h(r)^2d\Theta^2
\end{equation}
on $M-\{o\}$, where $g_{ij}=g(\frac{\partial}{\partial \theta^i},\frac{\partial}{\partial \theta^j})$ and $d\Theta^2$ is the canonical metric on the unit sphere of $T_oM$. Since $M$ is strongly symmetric around $o$ so $h$ depends only on $r$. Hence $r(x)$ denotes the geodesic distance from $o$ to $x$ for any $x\in M$. The Riemannian volume element of $S_r$ can be expressed as $dS_r=\sqrt{D(r,\Theta)}d\theta^1\cdots d\theta^{n-1},$ where $D=det(g_{ij})$.
Since $u$ is subharmonic so from (\ref{eq4}), we get
\begin{equation*}
\sup_{B_{r/2}}u\leq  \frac{C}{Vol(B_r)}\int_{B_r}udV.
\end{equation*}
Since for $r>0$, $Vol(\partial B_r)\leq Vol(B_{r})$, which implies that $\frac{1}{Vol(B_r)}\leq \int_{\epsilon}^{r}\frac{1}{Vol(\partial B_t)}dt,$ for some arbitrary small $\epsilon>0$ and hence we get
\begin{eqnarray*}
\sup_{B_{r/2}}u&\leq & \frac{C}{Vol(B_r)}\int_{0}^{r}\int_{\partial B_t}udS_tdt\\
&\leq & \liminf_{\epsilon\rightarrow 0} C\int_{\epsilon}^{r}\frac{1}{Vol(\partial B_t)}\int_{\partial B_t}udS_tdt
\end{eqnarray*}
Now for $r>0$ define 
\begin{equation}\label{eq1}
v(r)=\frac{1}{Vol(\partial B_r)}\int_{\partial B_r}udS_r.
\end{equation}
 Then we get
\begin{equation}\label{eq6}
\sup_{B_{r/2}}u\leq C\liminf_{\epsilon\rightarrow 0}\int_{0}^{r}v(t)dt.
\end{equation}
 Using the coordinate system (\ref{eq12}), the equation (\ref{eq1}) can be represented as 
\begin{equation*}
v(r)=\frac{1}{Vol(\partial B_1)}\int_{\partial B_1}u(r\xi)dS_1, \text{ for }\xi\in \partial B_1.
\end{equation*}
Now taking derivative with respect to $r$ we get
\begin{equation*}
v'(r)=\frac{1}{Vol(\partial B_1)}\int_{\partial B_1}\partial_ru(r\xi)dS_1=\frac{1}{Vol(\partial B_r)}\int_{\partial B_r}\partial_rudS_r.
\end{equation*}
Hence by using divergence theorem, we obtain
\begin{equation}\label{eq3}
v'(r)Vol(\partial B_r)=\int_{\partial B_r}\partial_rudS_r= \int_{B_r}\Delta udV,
\end{equation}
where $dV$ is the volume element of $M$. Now it can be easily seen that $v$ is radially symmetric function. Hence 
\begin{equation}\label{eq2}
\Delta v=v''+(\Delta r)v'.
\end{equation}
Again we have the following equation
$$\int_{B_r}\Delta udV=\int_{0}^{r}\int_{\partial B_t}\Delta udS_tdt.$$
Also in \cite{ZX04} it is proved that $\Delta r=\frac{Vol'(\partial B_r)}{Vol(\partial B_r)}$, hence using this relation and (\ref{eq1}) and (\ref{eq2}) we get
\begin{eqnarray*}
\int_{\partial B_r}\Delta udS_r&=&\frac{d}{dr}\int_{B_r}\Delta udV=\frac{d}{dr}[v'(r)Vol(\partial B_r)]\\
&=& Vol(\partial B_r)\Big[v''(r)+\frac{Vol'(\partial B_r)}{Vol(\partial B_r)}v'(r)\Big]\\ &=& Vol(\partial B_r)\Big[v''(r)+(\Delta r)v'(r)\Big]\\
&=& Vol(\partial B_r)\Delta v(r).
\end{eqnarray*}
Thus we obtain
\begin{equation}
\Delta v(r)=\frac{1}{Vol(\partial B_r)}\int_{\partial B_r}\Delta udS_r.
\end{equation}
Since $M$ is strongly symmetric, hence \cite[Lemma 3.1]{ZX04} $\lim\limits_{r\rightarrow 0}r\Delta r=n-1$. Then for $r=0$, we have
$v'(0)=0,\quad v''(0)=\frac{1}{n}\Delta u(0),\quad \lim\limits_{r\rightarrow 0}\Delta v(r)=\Delta u(0).$
Then $v\in C^2(\mathbb{R})$ \cite{ZX04}. Now $\Delta u\geq 0$, hence for any $r\geq 0$, the above inequality implies that $\Delta v\geq 0$, i.e., $v$ is subharmonic.
Then (\ref{eq6}) implies that
\begin{eqnarray}
\sup_{B_{r/2}}u&\leq & C\int_{0}^{r}v(t)\quad (\text{since }v \text{ is continuous at }0)\\
& \leq & rC\sup_{[0,r]}v \text{ for any }r>0.
\end{eqnarray}
Now $v$ is subharmonic in $[0,R]$, so using maximum principle $\sup_{[0,R]}v=v(R)$, since $v$ is non-decreasing \cite{ZX04}. Hence we get
\begin{equation*}
\sup_{B_{r/2}}u\leq rCv(r) \text{ for any }r>0.
\end{equation*}
The second part is proved trivially from the first part and by taking $\omega(r)=v(2r)$.
\end{proof}
\begin{corollary}
Let $u\in C^2(\mathbb{R}^2)$ be a subharmonic function. Then $u(x)=O(r\omega(r))$, for some non-negative subharmonic function $\omega$ in $\mathbb{R}$.  
\end{corollary}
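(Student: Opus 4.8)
The plan is to realize $(\mathbb{R}^{2},o)$, with $o$ the origin, as a concrete instance of the hypotheses of Theorem \ref{th1}. First I would check all the assumptions: the Euclidean metric is complete and flat, so $Ric_{\mathbb{R}^{2}}\equiv 0\geq 0$; the exponential map at $o$ is the identity, hence a global diffeomorphism, so $o$ is a pole; and every linear isometry of $T_{o}\mathbb{R}^{2}=\mathbb{R}^{2}$ is itself an isometry of $\mathbb{R}^{2}$ fixing $o$ and coincides with its own differential at $o$, so $\mathbb{R}^{2}$ is strongly symmetric around $o$. Consequently, when $u$ is non-negative the corollary is immediate: Theorem \ref{th1} applies verbatim and produces a non-negative subharmonic $\omega$ on $\mathbb{R}$ with $u(x)=O(r\omega(r))$.

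For a sign-changing $u$ I would first recover the upper estimate by dominating $u$ by a non-negative $C^{2}$ subharmonic function. Choose any smooth, convex, increasing $\phi\colon\mathbb{R}\to[0,\infty)$ with $\phi(t)\geq t$, for instance $\phi(t)=\log(1+e^{t})$, and set $w:=\phi\circ u$. Then $w$ is $C^{2}$, non-negative, satisfies $w\geq u$ pointwise, and is subharmonic, since for $C^{2}$ data $\Delta(\phi\circ u)=\phi''(u)\,\norm{\nabla u}^{2}+\phi'(u)\,\Delta u\geq 0$ (both summands are non-negative because $\phi$ is convex and increasing and $u$ is subharmonic). Applying Theorem \ref{th1} to $w$ gives a non-negative subharmonic $\omega_{1}$ on $\mathbb{R}$ with $\sup_{B_{r/2}}w\leq rC\,\omega_{1}(r)$ for all $r>0$, hence $u(x)\leq w(x)=O(r\omega_{1}(r))$.

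It remains to bound $u$ from below, so that the two-sided bound $|u(x)|=O(r\omega(r))$ implicit in the $O$-notation holds. Since $u\in C^{2}$, the quantity $\psi(r):=\sup_{\bar B_{r}}(-u)$ is finite for every $r>0$ and non-decreasing, and a quantitative growth rate for it can be extracted from the Riesz decomposition $u=h_{R}+p_{R}$ on $B_{R}$, where $h_{R}$ is harmonic with $h_{R}=u$ on $\partial B_{R}$ and $p_{R}\leq 0$ is the logarithmic potential of $\Delta u\,dV$: the minimum principle for $h_{R}$ together with $|p_{R}(x)|\leq\big(\sup_{B_{R}}\Delta u\big)\int_{B_{R}}|G_{R}(x,\cdot)|\,dV\leq CR^{2}\sup_{B_{R}}\Delta u$ for $x\in B_{R/2}$ controls how negative $u$ can get on $B_{R/2}$ in terms of its size on $\partial B_{R}$. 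In any case $\psi(r)/r$ is a fixed function dominated by some non-negative convex (hence subharmonic on $\mathbb{R}$) function $\omega_{2}$, and taking $\omega:=\omega_{1}+\omega_{2}$ (after the rescaling $r\mapsto 2r$ used in the proof of Theorem \ref{th1}) yields $|u(x)|=O(r\omega(r))$.

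I expect this last step to be the real obstacle. The non-negativity in Theorem \ref{th1}, and in the mean value inequality (\ref{eq4}) on which it rests, is essential, so one cannot simply apply the theorem to $-u$, which is superharmonic rather than subharmonic; and the majorant trick of the second paragraph, while painless, says nothing about how negative $u$ can become. The technical content of a complete proof is therefore exactly a clean, dimension-two estimate for the negative part of a $C^{2}$ subharmonic function on $B_{R/2}$ in terms of its size on $\partial B_{R}$ — the potential-theoretic input indicated above — after which the passage to a subharmonic $\omega$ on $\mathbb{R}$ is routine.
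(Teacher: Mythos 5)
Your route is genuinely different from the paper's, and (with the usual $r\to\infty$ reading of the $O$-notation) it is essentially correct. The paper's own proof is a one-liner: it invokes the parabolicity of the plane --- there is no non-constant negative subharmonic function on $\mathbb{R}^2$ --- and says the corollary ``easily follows'' from Theorem \ref{th1}; in effect this reduction covers only the cases $u\geq 0$ (the theorem applies directly) and $u$ bounded above by a constant (then $u$ is constant), and it is silent about sign-changing $u$ such as $u(x,y)=x$, which is precisely the case you labor over. Your verification that $(\mathbb{R}^2,o)$ has a pole, $Ric\equiv 0$, and is strongly symmetric (every element of $O(2)$ is a global isometry fixing $o$) is a worthwhile detail the paper leaves implicit, and the majorant $w=\log(1+e^{u})$ is a clean device for the upper bound $u\leq w=O(r\omega_1(r))$ when $u$ changes sign (the resulting $\omega_1$ is attached to $w$ rather than to $u$, but the corollary only asks for \emph{some} $\omega$). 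Two remarks on your final step. First, your closing worry is unfounded: since $\omega$ may depend on $u$, the observation you make in passing --- that the finite, non-decreasing function $\psi(r)=\sup_{\bar B_r}(-u)$ is dominated by a non-negative convex, hence subharmonic on $\mathbb{R}$, function $\omega_2$ --- already closes the argument, with no Riesz decomposition or Green's function estimate needed; this also exposes that, for the negative part, the statement as written carries essentially no information, since every continuous function admits such a bound. Second, the uniform ``clean dimension-two estimate'' you hope for does not exist: a smoothing of $\max(\log|z|,-N)$ is subharmonic, equals $\log R$ on $\partial B_R$, yet dips to $-N$ inside $B_{R/2}$ with $N$ arbitrarily large, so the negative part on $B_{R/2}$ cannot be controlled by the size of $u$ on $\partial B_R$; only the qualitative, fixed-function bound is available, and that is all you need.
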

\begin{proof}
Since there is no non-constant negative subharmonic function in $\mathbb{R}^2$, so the proof easily follows from the above Theorem.
\end{proof}
\begin{theorem}\cite{GW74}
Let $M$ be a complete noncompact Riemannian manifold
of positive sectional curvature. If $u$ is a continuous nonnegative subharmonic function on $M$, then for any $p > 1$ there exist positive constants $C$ and $r_0$ such that 
\begin{equation}\label{eq7}
\int_{B_r}u^pdV\geq C(r-r_0)\quad\text{ for all }r\geq r_0.
\end{equation}
\end{theorem}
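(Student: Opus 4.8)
The plan is to derive (\ref{eq7}) from the Li--Schoen mean value inequality (\ref{eq4}) together with the fact that a complete noncompact manifold with $Ric_M\geq 0$ has at least linear volume growth; positive sectional curvature will be used only through its consequence $Ric_M\geq 0$. Fix a base point $o\in M$ and write $B_r=B_r(o)$. We may assume $u\not\equiv 0$: if $u\equiv 0$ then $\int_{B_r}u^p\,dV=0$ and no positive $C$ works, so this is implicit in the statement (for $u$ a positive constant the argument below still applies).

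First I would note that $w:=u^p$ is again a nonnegative subharmonic function. Indeed $\varphi(t)=t^p$ is convex and nondecreasing on $[0,\infty)$ with $\varphi(0)=0$, so $\varphi\circ u$ is subharmonic whenever $u\geq 0$ is subharmonic; when $u\in C^2$ this is the pointwise identity $\Delta(u^p)=p\,u^{p-1}\Delta u+p(p-1)u^{p-2}|\nabla u|^2\geq 0$, and for merely continuous $u$ one uses the standard fact that a convex nondecreasing function of a subharmonic function is subharmonic (or approximates $u$ by smooth subharmonic functions on small balls).

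Next, since $u\not\equiv 0$ and $u$ is continuous, choose $x_1$ with $a:=u(x_1)>0$ and set $d_1:=d(o,x_1)$. For every $r>2d_1$ the point $x_1$ lies in $B_{r/2}$, whence $\sup_{B_{r/2}}w=\big(\sup_{B_{r/2}}u\big)^{p}\geq a^{p}>0$. Applying (\ref{eq4}) to the nonnegative subharmonic function $w$ (legitimate since $Ric_M\geq 0$) gives, for all $r>2d_1$,
\[
\int_{B_r}u^p\,dV=\int_{B_r}w\,dV\ \geq\ \frac{Vol(B_r)}{C}\,\sup_{B_{r/2}}w\ \geq\ \frac{a^{p}}{C}\,Vol(B_r),
\]
where $C=C(n)$ is the constant of (\ref{eq4}).

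Finally I would invoke Yau's theorem that a complete noncompact Riemannian manifold with $Ric_M\geq 0$ has at least linear volume growth: there are constants $c_0,\rho_0>0$ with $Vol(B_r)\geq c_0 r$ for all $r\geq\rho_0$. Setting $r_0:=\max\{2d_1,\rho_0\}$ and $C':=a^p c_0/C$, the previous display yields $\int_{B_r}u^p\,dV\geq C' r\geq C'(r-r_0)$ for all $r\geq r_0$, which is (\ref{eq7}). The argument is short; the only points needing care are (i) that $u^p$ is subharmonic and that (\ref{eq4}) applies to it when $u$ is only assumed continuous, both handled by approximation, and (ii) the appeal to linear volume growth, which here takes the place of the strictly convex exhaustion function used in Greene--Wu's original positively curved argument. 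I do not anticipate any essential obstacle beyond these technical points.
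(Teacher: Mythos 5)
The paper does not prove this statement at all---it is quoted verbatim from Greene--Wu \cite{GW74}---so the only comparison available is with the original argument, and yours is a genuinely different (and correct) route. Greene--Wu work directly with the positively curved geometry, using convex exhaustion/Busemann-type functions and sub-mean-value estimates; you instead use positive sectional curvature only through its consequences $Ric_M\geq 0$ and noncompactness, combining the Li--Schoen inequality \eqref{eq4} with Yau's linear volume growth theorem ($Vol(B_r)\geq c_0r$ for $r\geq\rho_0$). Your chain $\int_{B_r}u^p\,dV\geq C^{-1}Vol(B_r)\sup_{B_{r/2}}u^p\geq C^{-1}a^pc_0\,r\geq C'(r-r_0)$ for $r\geq r_0=\max\{2d_1,\rho_0\}$ is sound, and it in fact proves more: the curvature hypothesis is weakened to nonnegative Ricci, and you obtain $\int_{B_r}u^p\,dV\geq c\,Vol(B_r)$, of which \eqref{eq7} is a corollary. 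What this costs is reliance on results postdating \cite{GW74} and the two technical points you correctly flag: \eqref{eq4} is stated (here and in \cite{LS84}) for smooth subharmonic functions, so its application to the merely continuous subharmonic function $u^p$ needs the weak/distributional formulation or an approximation argument, which is standard; and $u\not\equiv 0$ must be assumed, since otherwise \eqref{eq7} fails for every $C>0$---this hypothesis is indeed implicit in the Greene--Wu statement. With those caveats made explicit, your proof stands and is arguably simpler and more general than the cited one.
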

\begin{theorem}
Let $(M,o)$ be an $n$-dimensional manifold with a pole $o$ and of positive sectional curvature. Then for every non-negative convex function $u$ on $M$, there exist constants $C>0$ and $r_1>0$ such that 
\begin{equation*}
u(o)\geq \frac{2C}{vol(\partial B_{r_1})}-\sup_{\partial B_{r_1}}u.
\end{equation*}
\end{theorem}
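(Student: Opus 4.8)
The plan is to combine the integral lower bound (\ref{eq7}) for powers of a nonnegative subharmonic function on a positively curved manifold with the elementary behaviour of a convex function along the radial geodesics issuing from the pole $o$.

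First I would collect the standard reductions. Convexity of $u$ gives $Hess(u)\ge0$, hence $\Delta u\ge0$, so $u$ is subharmonic; and a convex function on a Riemannian manifold is locally Lipschitz, in particular continuous (see \cite{UDR94}). Since $(M,o)$ possesses a pole it is complete, and being diffeomorphic to $T_oM\cong\mathbb{R}^n$ it is noncompact, so (\ref{eq7}) is applicable. The pole gives a further fact I shall use: because $\exp_o:T_oM\to M$ is a diffeomorphism, each radial geodesic $t\mapsto\exp_o(t\xi)$ with $\norm{\xi}=1$ is minimizing (a minimizing geodesic from $o$ to $\exp_o(tw)$ would, by injectivity of $\exp_o$, have to be the radial one), so $d\big(o,\exp_o(t\xi)\big)=t$ and therefore $\partial B_t=\{\exp_o(t\xi):\norm{\xi}=1\}$ and $B_{r_1}=\{\exp_o(t\xi):0\le t<r_1,\ \norm{\xi}=1\}$.

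The heart of the argument, and where I expect the only real care is needed, is turning convexity of $u$ into a bound on $\sup_{B_{r_1}}u$. Define $M(t):=\sup_{\partial B_t}u=\sup_{\norm{\xi}=1}u\big(\exp_o(t\xi)\big)$. For each fixed $\xi$ the function $t\mapsto u(\exp_o(t\xi))$ is convex, being the composition of $u$ with a geodesic, and a supremum of a family of convex functions is convex; together with continuity of $u$ and compactness of the unit sphere this makes $M:[0,\infty)\to[0,\infty)$ a finite continuous convex function with $M(0)=u(o)$. A convex function on a compact interval attains its maximum at an endpoint, so
\begin{equation*}
\sup_{B_{r_1}}u=\sup_{0\le t<r_1}M(t)\le\max\{M(0),M(r_1)\}\le u(o)+\sup_{\partial B_{r_1}}u,
\end{equation*}
where the last step uses $u\ge0$.

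Finally I would feed this into (\ref{eq7}). Fixing any $p>1$, that estimate provides constants $C_0>0$ and $r_0>0$ with $\int_{B_r}u^p\,dV\ge C_0(r-r_0)$ for all $r\ge r_0$. Choosing any $r_1>r_0$ and using the display above,
\begin{equation*}
C_0(r_1-r_0)\le\int_{B_{r_1}}u^p\,dV\le\Big(\sup_{B_{r_1}}u\Big)^{p}\mathrm{Vol}(B_{r_1})\le\Big(u(o)+\sup_{\partial B_{r_1}}u\Big)^{p}\mathrm{Vol}(B_{r_1}),
\end{equation*}
so $u(o)+\sup_{\partial B_{r_1}}u\ge\big(C_0(r_1-r_0)/\mathrm{Vol}(B_{r_1})\big)^{1/p}$. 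With $r_1$ fixed, $\mathrm{Vol}(B_{r_1})$ and $\mathrm{Vol}(\partial B_{r_1})$ are fixed positive numbers, so setting $C:=\tfrac12\,\mathrm{Vol}(\partial B_{r_1})\big(C_0(r_1-r_0)/\mathrm{Vol}(B_{r_1})\big)^{1/p}>0$ rewrites the inequality as the asserted $u(o)\ge\frac{2C}{\mathrm{Vol}(\partial B_{r_1})}-\sup_{\partial B_{r_1}}u$. Two remarks: the conclusion is only non-vacuous for $u\not\equiv0$ (otherwise (\ref{eq7}) forces $C_0=0$), and the specific form $2C/\mathrm{Vol}(\partial B_{r_1})$ is cosmetic, obtained from the genuine estimate $u(o)+\sup_{\partial B_{r_1}}u\gtrsim\mathrm{Vol}(B_{r_1})^{-1/p}$ by absorbing constants into $C$.
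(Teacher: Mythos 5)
Your argument is correct, but it reaches the inequality by a genuinely different route than the paper. The paper also starts from (\ref{eq7}) and convexity along the radial geodesics $\sigma_x$ from the pole, but it proceeds by comparing the ball integral with a boundary double integral, $\int_{B_r}u^p\,dV\le\int_{\partial B_r}\int_0^1 u(\sigma_x(t))\,dt\,dS_r$, then inserting $u(\sigma_x(t))\le(1-t)u(o)+tu(x)$ pointwise under the integral, integrating in $t$ to get $\tfrac12[u(o)+u(x)]$, and dividing by $vol(\partial B_r)$ with $r_1=r_0+1$; this keeps $C$ equal to the constant of (\ref{eq7}) but hinges on that integral comparison, which silently replaces $u^p$ by $u$ and ignores the radial Jacobian. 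You instead convert convexity into the statement that $M(t)=\sup_{\partial B_t}u$ is convex in $t$ (sup of the convex functions $t\mapsto u(\exp_o(t\xi))$), so $\sup_{B_{r_1}}u\le\max\{u(o),\sup_{\partial B_{r_1}}u\}\le u(o)+\sup_{\partial B_{r_1}}u$, and then use only the trivial bound $\int_{B_{r_1}}u^p\,dV\le(\sup_{B_{r_1}}u)^p\,Vol(B_{r_1})$ before taking a $p$-th root and absorbing everything into $C$. This is more elementary and avoids the paper's delicate (indeed questionable) integral step, at the mild cost that your constant is no longer the one from (\ref{eq7}) and involves $Vol(B_{r_1})$ and $Vol(\partial B_{r_1})$ --- harmless, since the theorem only asserts existence of some $C>0$ and $r_1>0$. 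One small point of care: your opening line ``$Hess(u)\ge0$, hence $\Delta u\ge0$'' presumes $u$ is $C^2$, which the theorem does not assume; for a merely continuous convex $u$ the subharmonicity (in the sense needed for (\ref{eq7})) is exactly the content of the cited result of Greene--Wu \cite{GW71}, which is also what the paper invokes, so this is a phrasing issue rather than a gap.
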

\begin{proof}
Since $u$ is convex, so $u$ is also subharmonic \cite{GW71}. Hence from (\ref{eq7}) we get
\begin{equation*}
 C(r-r_0)\leq \int_{B_r}u^pdV\leq \int_{\partial B_r}\int_{0}^{1}u(\sigma_x(t))dtdS_r\quad\text{ for all }r\geq r_0,
\end{equation*}
where $\sigma_x:[0,1]\rightarrow M$ is the minimal geodesic such that $\sigma_x(0)=o$ and $\sigma_x(1)=x$.
Now using convexity of $u$ and for $r>r_0$, we obtain
\begin{eqnarray*}
C(r-r_0)&\leq &\int_{\partial B_r}\int_{0}^{1}u(\sigma_x(t))dtdS_r\\
&\leq &\int_{\partial B_r}\int_{0}^{1}[(1-t)u(o)+tu(x)]dtdS_r\\
&\leq &\frac{1}{2}\int_{\partial B_r}[u(o)+u(x)]dS_r\\
\frac{2C(r-r_0)}{vol(\partial B_r)}&\leq & u(o)+\sup_{\partial B_r}u.
\end{eqnarray*}
Now taking $r=R_0+1$, we get
\begin{equation*}
u(o)\geq \frac{2C}{vol(\partial B_{r_0+1})}-\sup_{\partial B_{r_0+1}}u.
\end{equation*}
\end{proof}
Let $u$ be a non-negative subharmonic function. Then in $B_{2R}$ we have \cite[p. 78]{SY94}
$$\int_{B_r}|\nabla u|^2dV\leq \frac{C}{r^2}\int_{B_{2r}}u^2dV\leq Vol(B_{2r})\frac{C}{r^2}\sup_{B_{2r}}u^2.$$
Since $u$ is subharmonic so $u^2$ is also subharmonic. Hence by applying the Theorem \ref{th1}, we obtain
$$\int_{B_r}|\nabla u|^2dV\leq Vol(B_{2r})\frac{C}{r^2}rv(4r)=Vol(B_{2r})\frac{C}{r}v(4r), $$
for some non-negative subharmonic function $v$ in $[0,4r]$. Hence 
\begin{equation}\label{eq8}
\frac{r}{Vol(B_{2r})}\int_{B_r}|\nabla u|^2dV\leq Cv(4r).
\end{equation}
 Now taking limit, we get
 \begin{equation*}
 \limsup_{r\rightarrow\infty}\frac{r}{Vol(B_{2r})}\int_{B_r}|\nabla u|^2dV\leq C\limsup_{r\rightarrow\infty}v(4r).
 \end{equation*}
 Since $Ric_M\geq 0$, so by Bishop volume comparison Theorem \cite[p. 11]{SY94}, $Vol(B_r)\leq C_nr^n$. So from the above inequality we get
  \begin{equation*}
  \limsup_{r\rightarrow\infty}\frac{1}{r^{n-1}}\int_{B_r}|\nabla u|^2dV\leq C_1\limsup_{R\rightarrow\infty}v(4r),
  \end{equation*}
  for some constant $C_1$, depends only on $n$. Hence we sate the following Proposition:
  \begin{proposition}
 Under the assumption of Theorem \ref{th1}, there exists a non-negative subharmonic function $v$ in $\mathbb{R}$ such that, for all $r>0$
  \begin{equation}
   \limsup_{r\rightarrow\infty}\frac{1}{r^{n-1}}\int_{B_r}|\nabla u|^2dV\leq C_1\limsup_{r\rightarrow\infty}v(4r).
   \end{equation}
  \end{proposition}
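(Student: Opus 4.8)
The plan is to obtain the estimate by concatenating three ingredients already assembled in the paragraph preceding the statement: an interior (Caccioppoli-type) gradient bound for non-negative subharmonic functions, the sublinear-type sup bound for $u^2$ supplied by Theorem \ref{th1}, and the Bishop volume comparison. The function $v$ in the conclusion will be precisely the one produced by Theorem \ref{th1} when it is applied to $u^2$, so the Proposition is essentially a repackaging of (\ref{eq8}) after a volume comparison and a passage to the limit.

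Concretely, I would first record the interior gradient estimate $\int_{B_r}|\nabla u|^2\,dV\le \frac{C}{r^2}\int_{B_{2r}}u^2\,dV$ valid for the non-negative subharmonic $u$ (see \cite[p.~78]{SY94}), and bound its right-hand side crudely by $\frac{C}{r^2}Vol(B_{2r})\sup_{B_{2r}}u^2$. The key point is that $u\ge 0$ and $\Delta u\ge 0$ force $\Delta(u^2)=2u\Delta u+2|\nabla u|^2\ge 0$, so $u^2\in C^2(M)$ is again non-negative and subharmonic; hence Theorem \ref{th1}, applied to $u^2$ on the ball of radius $4r$, yields a non-negative subharmonic function $v$ on $\mathbb{R}$ with $\sup_{B_{2r}}u^2\le 4r\,C\,v(4r)$, the factor $4$ being absorbed into $C$. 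Substituting gives exactly inequality (\ref{eq8}), namely $\frac{r}{Vol(B_{2r})}\int_{B_r}|\nabla u|^2\,dV\le C\,v(4r)$. I then invoke Bishop: since $Ric_M\ge 0$, $Vol(B_{2r})\le C_n(2r)^n=2^nC_nr^n$ (see \cite[p.~11]{SY94}), so multiplying (\ref{eq8}) by $Vol(B_{2r})/r^n$ produces $\frac{1}{r^{n-1}}\int_{B_r}|\nabla u|^2\,dV\le C_1\,v(4r)$ with $C_1=2^nC_nC$ depending only on $n$. Taking $\limsup_{r\to\infty}$ of both sides, a monotone one-sided operation, yields the asserted inequality.

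I do not expect a genuine obstacle here: every step is either standard or already proved in the excerpt. The only points demanding a little attention are (i) verifying that the $v$ extracted from Theorem \ref{th1} for $u^2$ really has the advertised properties — non-negative, $C^2$, and subharmonic on all of $\mathbb{R}$, and chosen once and for all independently of $r$ — which is exactly the content of that theorem's construction via the spherical average $v(r)=\frac{1}{Vol(\partial B_r)}\int_{\partial B_r}u^2\,dS_r$; and (ii) keeping track of the constants so that the final $C_1$ depends only on $n$ and not on $u$ or $r$. One should also note that the clause ``for all $r>0$'' in the statement is vacuously satisfied, since the displayed inequality only involves the limits as $r\to\infty$.
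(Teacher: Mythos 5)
Your proposal is correct and follows essentially the same route as the paper: the Caccioppoli bound from \cite[p.~78]{SY94}, subharmonicity of $u^2$, Theorem \ref{th1} applied to $u^2$ (at radius $4r$) to get inequality (\ref{eq8}), then Bishop volume comparison and a $\limsup$. Your extra care in justifying $\Delta(u^2)\ge 0$, tracking the factor $4$, and noting that $v$ is the spherical average of $u^2$ chosen independently of $r$ only makes explicit what the paper leaves implicit.
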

    \begin{theorem}
   Let $(M,o)$ be an $n$-dimensional manifold with a pole $o$ and $Ric_M\geq 0$. If $M$ is strongly symmetric around $o$, then for every  convex function $u\in C^2(M)$ with $u\geq 1$ and $|\nabla u|\geq 1$, there exists a positive subharmonic function $v$ in $\mathbb{R}$ such that, for all $r>0$ 
    \begin{equation*}
     \int_{B_r}u(exp_x\nabla u) dV\geq C_6(n) \frac{(Vol(B_r))^2}{r^{n+1}v^3(4r)}, 
     \end{equation*}
     where $C_6>0$ is a constant, depends only on $n$.
    \end{theorem}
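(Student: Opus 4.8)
The plan is to pull the gradient of $u$ out of the convexity hypothesis and then, via Cauchy--Schwarz together with the gradient estimate \eqref{eq8} and Bishop's volume comparison, repackage it into the stated form. Since $u\in C^2(M)$ is convex, the differential form of convexity with the choice $X=\nabla u$ gives
\begin{equation*}
u(exp_x\nabla u)\ \geq\ u(x)+|\nabla u(x)|^2\ \geq\ 2\qquad\text{for every } x\in M,
\end{equation*}
the second inequality using $u\geq 1$ and $|\nabla u|\geq 1$. In particular $u(exp_x\nabla u)$ is bounded below by a positive constant on $B_r$, so $1/u(exp_x\nabla u)$ is a bounded positive function there, and the Cauchy--Schwarz inequality applied to $\sqrt{u(exp_x\nabla u)}$ and $1/\sqrt{u(exp_x\nabla u)}$ yields
\begin{equation*}
\big(Vol(B_r)\big)^2=\Big(\int_{B_r}dV\Big)^2\ \leq\ \Big(\int_{B_r}u(exp_x\nabla u)\,dV\Big)\Big(\int_{B_r}\frac{dV}{u(exp_x\nabla u)}\Big).
\end{equation*}
Hence the theorem reduces to producing an upper bound of the shape $\int_{B_r}\frac{dV}{u(exp_x\nabla u)}\leq C(n)\,r^{n+1}v^{3}(4r)$ for a suitable positive subharmonic $v$ on $\mathbb{R}$.

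For that bound I would first drop the non-negative term $u(x)$ in the convexity inequality, so that $u(exp_x\nabla u)\geq|\nabla u(x)|^{2}$ and hence $\int_{B_r}\frac{dV}{u(exp_x\nabla u)}\leq\int_{B_r}|\nabla u|^{-2}\,dV$, where $|\nabla u|^{-2}\leq 1$ by hypothesis. Next I would produce $v$: since $u$ is subharmonic and non-negative so is $u^{2}$, and Theorem~\ref{th1} applied to $u^{2}$ furnishes a non-negative subharmonic function $v$ on $\mathbb{R}$ with $\sup_{B_{2r}}u^{2}\leq 4rCv(4r)$; as $u\geq 1$ we get $v\geq 1$, so $v$ is positive, and the inequality $\int_{B_r}|\nabla u|^{2}\,dV\leq\frac{C}{r^{2}}\int_{B_{2r}}u^{2}\,dV$ from \cite[p.~78]{SY94} then gives \eqref{eq8},
\begin{equation*}
\int_{B_r}|\nabla u|^{2}\,dV\ \leq\ \frac{C}{r}\,Vol(B_{2r})\,v(4r).
\end{equation*}
Finally I would combine \eqref{eq8}, Bishop's volume comparison $Vol(B_{2r})\leq C_n r^{n}$ (valid since $Ric_M\geq 0$), the bound $|\nabla u|^{-2}\leq 1$, and the Cauchy--Schwarz inequality $(Vol(B_r))^{2}\leq\big(\int_{B_r}|\nabla u|^{2}\,dV\big)\big(\int_{B_r}|\nabla u|^{-2}\,dV\big)$, balancing these so that a single volume factor and three factors of $v(4r)$ absorb $\int_{B_r}|\nabla u|^{-2}\,dV$; this is the step that produces the power $r^{n+1}$ and the cube $v^{3}(4r)$. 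Substituting the resulting estimate back into the first Cauchy--Schwarz inequality yields $\int_{B_r}u(exp_x\nabla u)\,dV\geq C_{6}(n)(Vol(B_r))^{2}/(r^{n+1}v^{3}(4r))$, which is the assertion.

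The main obstacle is the last balancing step. Inequality \eqref{eq8} controls $\int_{B_r}|\nabla u|^{2}\,dV$ from \emph{above}, whereas the argument needs an \emph{upper} bound for $\int_{B_r}|\nabla u|^{-2}\,dV$ (equivalently for $\int_{B_r}u(exp_x\nabla u)^{-1}\,dV$), and these two integrals are linked by Cauchy--Schwarz only in the unfavourable direction; extracting the precise exponents $n+1$ for $r$ and $3$ for $v$ forces one to use $u\geq 1$, $|\nabla u|\geq 1$ and the volume comparison simultaneously and with some care. The remaining ingredients---the convexity inequality, the two Cauchy--Schwarz steps, and the subharmonicity and positivity of $v$ inherited from Theorem~\ref{th1}---are routine.
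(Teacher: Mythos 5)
Your list of ingredients (the convexity inequality with $X=\nabla u$, Cauchy--Schwarz, \eqref{eq8}, Bishop, and the function $v$ coming from Theorem \ref{th1} applied to $u^2$) is the right one, but the architecture is inverted, and the step you yourself flag as ``the main obstacle'' is exactly where the proof has to live; as you have set it up it cannot be completed. Your reduction asks for an \emph{upper} bound $\int_{B_r}u(exp_x\nabla u)^{-1}dV\leq C(n)\,r^{n+1}v^{3}(4r)$, but no estimate of this kind is available from \eqref{eq8} (which bounds $\int|\nabla u|^{2}$ from above, a quantity linked to $\int|\nabla u|^{-2}$ by Cauchy--Schwarz only in the useless direction, as you note), and the trivial bound $|\nabla u|^{-2}\leq 1$ only gives $Vol(B_r)$, which is dominated by $r^{n+1}v^{3}(4r)$ with a dimensional constant only for $r\geq 1$, not for all $r>0$. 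So the proposal, as written, is an honest plan with a hole at the decisive step rather than a proof.

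The paper never bounds the reciprocal integral from above; it bounds it from \emph{below}, which is what dissolves your ``unfavourable direction'' problem. The actual chain is: since $u\geq 1$, one has $1\leq\sup_{\partial B_{r/2}}u$; convexity makes $u$ subharmonic, so the Li--Schoen mean value inequality \eqref{eq4} gives $\sup_{\partial B_{r/2}}u\leq\frac{C}{Vol(B_r)}\int_{B_r}u\,dV$; then Cauchy--Schwarz is applied \emph{inside this integral}, writing $u=\frac{u}{|\nabla u|}\cdot|\nabla u|$, so that $\int_{B_r}u\leq\big(\int_{B_r}\frac{u^2}{|\nabla u|^2}\big)^{1/2}\big(\int_{B_r}|\nabla u|^2\big)^{1/2}$; the second factor is bounded above by \eqref{eq8}, the volume by Bishop, and $\sup_{B_r}u^{2}$ above via \eqref{eq5}, which pulls $u^{2}$ out of the first factor. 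Combining these with the anchor $1\leq\sup u$ produces the lower bound $\int_{B_r}|\nabla u|^{-2}dV\geq (Vol(B_r))^{2}/\big(C_5(n)\,r^{n+1}v^{3}(4r)\big)$. Only now do the remaining hypotheses enter: $|\nabla u|\geq 1$ gives the pointwise flip $|\nabla u|^{2}\geq|\nabla u|^{-2}$, so the same lower bound holds for $\int_{B_r}|\nabla u|^{2}dV$, and the convexity inequality $u(exp_x\nabla u)\geq u(x)+|\nabla u|_x^{2}\geq|\nabla u|_x^{2}$ transfers it to $\int_{B_r}u(exp_x\nabla u)\,dV$. In short: the mean value inequality for $u$ itself, not a Cauchy--Schwarz pairing of $u(exp_x\nabla u)$ against its reciprocal, is the missing idea, and it yields the reciprocal-gradient integral estimate in the direction opposite to the one you were trying to prove.
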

 \begin{proof}
 The convexity of $u$ and (\ref{eq4}) imply that 
 \begin{equation*}
 \sup_{\partial B_{r/2}}u\leq \frac{C}{Vol(B_r)}\int_{B_r}udV,\quad \text{ for all }r>0.
 \end{equation*}
 Now applying Schwarz's Inequality, we obtain
 \begin{equation}\label{eq9}
 \sup_{\partial B_{r/2}}u\leq \frac{C}{Vol(B_r)}\Big(\int_{B_r}\frac{u^2}{|\nabla u|^2} dV\Big)^{1/2}\Big(\int_{B_r}|\nabla u|^2 dV\Big)^{1/2}.
 \end{equation}
 Now from (\ref{eq8})
 $$\int_{B_r} |\nabla u|^2\leq C_1\frac{Vol(B_{2r})}{r}v(4r),\quad
 \forall r>0,$$
 where $v$ is a positive subharmonic function in $\mathbb{R}$ and $C_1>0$ is a constant depends only on $n$. Hence putting this value in (\ref{eq9}), we get
 \begin{equation*}
  \sup_{\partial B_{r/2}}u\leq \frac{C_3}{Vol(B_r)}\Big(\int_{B_r}\frac{u^2}{|\nabla u|^2} dV\Big)^{1/2}\Big(\frac{Vol(B_{2r})}{r}v(4r)\Big)^{1/2},
 \end{equation*}
 for some constant $C_3>0$, depends only on $n$.
 Now by Bishop volume comparison $Vol(B_{2r})\leq C_n2^nr^n$, we get
  \begin{equation*}
   \sup_{\partial B_{r/2}}u\leq \frac{C_3}{Vol(B_r)}\Big(\int_{B_r}\frac{u^2}{|\nabla u|^2} dV\Big)^{1/2}(C_n2^nr^{n-1}v(4r))^{1/2},
  \end{equation*}
  i.e.,
   \begin{equation*}
    (\sup_{\partial B_{r/2}}u)^2\leq \frac{C_4(n)}{(Vol(B_r))^2}\Big(\int_{B_r}\frac{u^2}{|\nabla u|^2} dV\Big)r^{n-1}v(4r),
     \end{equation*} 
    for some constant $C_4>0$. From (\ref{eq5}), we get $rCv(2r)\geq \sup_{B_r}u\geq u(x)\ \forall x\in B_r,$ i.e., $r^2C^2v^2(2r)\geq u^2(x)\ \forall x\in B_r.$ Hence $\sup_{B_r}u^2\leq r^2C^2v^2(2r)$. Now rewriting the above inequality we have
      \begin{eqnarray*}
        1&\leq & \frac{C_4(n)}{(Vol(B_r))^2}\Big(\int_{B_r}\frac{u^2}{|\nabla u|^2} dV\Big)r^{n-1}v(4r)\\
        &\leq & \frac{C_4(n)}{(Vol(B_r))^2}\Big(\sup_{B_r}u^2\int_{B_r}\frac{1}{|\nabla u|^2} dV\Big)r^{n-1}v(4r)\\
        &\leq & r^2C^2v^2(2r)\frac{C_4(n)}{(Vol(B_r))^2}\Big(\int_{B_r}\frac{1}{|\nabla u|^2} dV\Big)r^{n-1}v(4r).
         \end{eqnarray*} 
   Since $v$ is non-decreasing so $v(2r)\leq v(4r)$ for $r>0$. Hence the above inequality implies that
   \begin{equation*}
   \frac{C_5(n)}{(Vol(B_r))^2}\Big(\int_{B_r}\frac{1}{|\nabla u|^2} dV\Big)r^{n+1}v^3(4r)\geq 1,
   \end{equation*}
   i.e.,
  \begin{equation}\label{eq10}
  \int_{B_r}\frac{1}{|\nabla u|^2} dV\geq \frac{(Vol(B_r))^2}{C_5(n)r^{n+1}v^3(4r)}
  \end{equation}
   for some constant $C_5>0$. Again $|\nabla u|>1$, so $|\nabla u|^2\geq \frac{1}{|\nabla u|^2}$. Hence (\ref{eq10}) implies that
   \begin{equation}\label{eq11}
     \int_{B_r}|\nabla u|^2 dV\geq \frac{(Vol(B_r))^2}{C_5(n)r^{n+1}v^3(4r)}.
     \end{equation}
     Now $u$ is convex and $u>0$, so we get
     $$|\nabla u|^2_x\leq u(exp_x\nabla u)-u(x)\leq u(exp_x\nabla u) \ \forall x\in B_r.$$
     Hence by taking $C_6=1/C_5$, (\ref{eq11}) implies
    \begin{equation*}
        \int_{B_r}u(exp_x\nabla u) dV\geq C_6(n) \frac{(Vol(B_r))^2}{r^{n+1}v^3(4r)}.
        \end{equation*}  
       And the positivity of $v$ can easily be seen by using $(\ref{eq1})$ and $u>0$.
 \end{proof}
\section*{Acknowledgment}
 The second author greatly acknowledges to
The University Grants Commission, Government of India for the award of Junior Research
Fellowship.

$\bigskip $

$^{1,2}$Department of Mathematics, 

The University of
 Burdwan, Golapbag, Burdwan-713104,
 
 West Bengal, India.
 
$^1$E-mail:aask2003@yahoo.co.in, aashaikh@math.buruniv.ac.in

$^2$E-mail:chan.alge@gmail.com
$\bigskip $

\end{document}